  \def\SvZfontcode{8} % 2 is also nice.
  \def\SvZslantedGreekCapitals{1}
\def\SvZrequireslantedRedef{0}
\def\SvZrequireslantedRedef{1}
\def\SvZrequireslantedRedef{1}
\def\SvZrequireslantedRedef{1}
\DeclareMathAlphabet{\bm}{OT1}{ptm}{b}{it} % Apparently this creates boldface. It is better than bm, but doesn't make symbols bold.
\def\SvZrequireslantedRedef{1}
\def\SvZrequireslantedRedef{1}
\renewcommand{\Gamma}{\varGamma}
\renewcommand{\Delta}{\varDelta}
\renewcommand{\Theta}{\varTheta}
\renewcommand{\Lambda}{\varLambda}
\renewcommand{\Xi}{\varXi}
\renewcommand{\Pi}{\varPi}
\renewcommand{\Sigma}{\varSigma}
\renewcommand{\Upsilon}{\varUpsilon}
\renewcommand{\Phi}{\varPhi}
\renewcommand{\Psi}{\varPsi}
\renewcommand{\Omega}{\varOmega}
\renewcommand{\phi}{\varphi}
\newcommand{\ignore}[1]{}
\let\Oldsetminus\setminus
\renewcommand{\setminus}{\ensuremath{-}}
\newcommand{\delete}{\ensuremath{\!\Oldsetminus\!}}
\newcommand{\contract}{\ensuremath{\!/}}
\newtheorem{theorem}{Theorem}[section]
\newtheorem{theorem}{Theorem}
\newtheorem{lemma}[theorem]{Lemma}
\newtheorem{corollary}[theorem]{Corollary}
\newenvironment{claimenv}{\list{}{\rightmargin0pt\leftmargin10pt\topsep0pt}\item[]}{\endlist}
  \newcommand{\XX}{\ensuremath{\mathit{X\!X}}}
  \newcommand{\trugr}{\ensuremath{\ddot{U}}}
	\newcommand{\Dutchvon}[2]{#2}
\begin{document}
%%%%%%%%%%%%%%%%%%%%%%%%%%%%%%%%%%%%%%%%%%%%%%%%%%%%%%%%%%%%%%%%%%%%%
\title{The structure of graphs with a vital linkage of order 2\thanks{The research of all authors was partially supported by a grant from the Marsden Fund of New Zealand. The first author was also supported by a FRST Science \& Technology post-doctoral fellowship. The third author was also supported by the Netherlands Organization for Scientific Research (NWO).}}
%%%%%%%%%%%%%%%%%%%%%%%%%%%%%%%%%%%%%%%%%%%%%%%%%%%%%%%%%%%%%%%%%%%%%
\author{Dillon Mayhew\thanks{School of Mathematics, Statistics and Operations Research, Victoria University of Wellington, New Zealand. E-mail: \url{Dillon.Mayhew@msor.vuw.ac.nz}, \url{Geoff.Whittle@msor.vuw.ac.nz}} \and Geoff Whittle\footnotemark[2]  \and Stefan H. M. van Zwam \thanks{Centrum Wiskunde en Informatica, Postbus 94079, 1090 GB Amsterdam, The Netherlands. E-mail: \url{Stefan.van.Zwam@cwi.nl}}}

\maketitle

\abstract{A \emph{linkage of order $k$} of a graph $G$ is a subgraph with $k$ components, each of which is a path. A linkage is \emph{vital} if it spans all vertices, and no other linkage connects the same pairs of end vertices. We give a characterization of the graphs with a vital linkage of order 2: they are certain minors of a family of highly structured graphs.}

\section{Introduction}

\citet{RSXXI} defined a \emph{linkage} in a graph $G$ as a subgraph in which each component is a path. The \emph{order} of a linkage is the number of components. A linkage $L$ of order $k$ is \emph{unique} if no other collection of paths connects the same pairs of vertices, it is \emph{spanning} if $V(L) = V(G)$, and it is \emph{vital} if it is both unique and spanning. Graphs with a vital linkage are well-behaved. For instance, Robertson and Seymour proved the following:

\begin{theorem}[{\citet[Theorem 1.1]{RSXXI}}]
  There exists an integer $w$, depending only on $k$, such that every graph with a vital linkage of order $k$ has tree width at most $w$.
\end{theorem}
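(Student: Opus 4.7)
The plan is to argue by contradiction using the Grid Theorem of Robertson and Seymour: large tree width forces the existence of a large wall as a minor. Suppose $G$ carries a vital linkage $L = P_1 \cup \cdots \cup P_k$ but has tree width greater than some $w = w(k)$ to be determined. The Grid Theorem then yields a wall $W$ in $G$ of order $h(w)$ tending to infinity with $w$. Since $L$ consists of only $k$ paths, a pigeonhole argument should extract a large sub-wall $W'$ whose interior is essentially disjoint from most of the $P_i$: there are many rows and columns of $W$ but only $k$ paths competing for them.

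Next I would invoke the Flat Wall Theorem: after removing a bounded apex set $A$, either $G$ has a large clique minor, or some further sub-wall $W''$ is flat in $G - A$. The clique-minor case is the easier of the two: the routing freedom inside a large clique minor allows me to replace a segment of some $P_i$ by a path through the clique, producing a second linkage on the same terminal pairs as $L$ and contradicting uniqueness. In the flat case I would work in the essentially planar region around $W''$; the wall contains many disjoint bricks, providing explicit local detours through which a segment of some $P_i$ can be replaced while its endpoints are preserved.

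The delicate step, and where I expect the central obstacle to lie, is preserving \emph{spanning} while rerouting. A naive detour abandons the vertices of the replaced segment and leaves the wall interior uncovered, so the resulting linkage connects the correct pairs but fails to span $V(G)$, and hence does not contradict vitality. My proposed fix is to design the detour as a Hamiltonian-style snake through the flat region, traversing every brick of $W''$ in order to absorb both the ex-segment vertices and the previously unused wall vertices. The apex vertices of $A$ must also be spliced back in; since $|A|$ is bounded and each apex has high connectivity into $W''$, short local attachments should suffice. The bound $w(k)$ then arises by requiring the initial wall to be large enough to survive three successive losses---the apex deletion, the pigeonhole separation of $W'$ from the paths $P_i$, and the extra size needed to accommodate the spanning snake---each a function of $k$, giving $w(k)$ of tower type in $k$.
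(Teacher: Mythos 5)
This theorem is not proved in the paper at all: it is quoted verbatim from Robertson and Seymour (Graph Minors XXI), and the authors explicitly defer to that paper (and to the later, shorter proof of Kawarabayashi and Wollan). So there is no internal proof to compare yours against; your sketch can only be judged on its own terms, and as it stands it has genuine gaps.

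The most important one is that the obstacle you identify as central --- preserving the spanning property of the rerouted linkage --- does not exist, and the ``Hamiltonian snake'' you propose to overcome it is both unnecessary and unobtainable. By the paper's definition, $L$ is vital if it is spanning \emph{and} unique, where unique means no \emph{other} collection of paths connects the same terminal pairs; the rival linkage witnessing non-uniqueness is not required to span $V(G)$. So a naive detour that abandons the old segment's vertices already contradicts vitality, and you do not need (and in general cannot find) a Hamiltonian-style traversal of the flat region. Conversely, you have mislocated where the spanning hypothesis actually does work. Since $L$ spans $V(G)$, \emph{every} vertex of the wall $W$ lies on some $P_i$, so your pigeonhole step --- extracting a large sub-wall whose interior is essentially disjoint from the paths --- cannot be carried out in the form you state; what pigeonhole gives instead is a single path $P_i$ covering a constant fraction of the branch vertices of a large wall, and the real content of the proof is showing that such a path must double back often enough that two of its points are close in the wall but far apart along the path, yielding an alternative routing between the same endpoints. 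That combinatorial core --- together with handling the clique-minor and apex cases so that the reroute avoids the other $k-1$ paths --- is exactly the part your sketch leaves unexecuted, so the argument as written does not yet constitute a proof.
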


Note that Robertson and Seymour use the term $p$-linkage to denote a linkage with $p$ terminals. Robertson and Seymour's proof of this theorem is surprisingly elaborate, and uses their structural description of graphs with no large clique-minor. Recently \citet{KW10} proved a strengthening of this result. Their shorter proof avoids using the structure theorem.

Our interest in linkages, in particular those of order 2, stems from quite a different area of research: matroid theory. \citet{TruVI} studied a class of binary matroids that he calls \emph{almost regular}. His proofs lean heavily on a class of matroids that are single-element extensions of the cycle matroids of graphs with a vital linkage of order 2. These matroids turned up again in the excluded-minor characterization of matroids that are either binary or ternary, by \citet{MOOW11}. 

Truemper proves that an almost regular matroid can be built from one of two specific matroids by certain $\Delta-Y$ operations. 
This is a deep result, but it does not yield bounds on the branch width of these matroids. In a forthcoming paper the authors of this paper, together with Chun, will give an explicit structural description of the class of almost regular matroids \cite{CMWZ10}. The main result of this paper will be of use in that project.

%We will briefly consider this class of matroids in Section \ref{sec:selfdual}.

To state our main result we need a few more definitions. Fix a graph $G$ and a spanning linkage $L$ of order $k$. A \emph{path edge} is a member of $E(L)$; an edge in $E(G)\setminus E(L)$ is called a \emph{chord} if its endpoints lie in a single path, and a \emph{rung edge} otherwise. If $L$ is vital, then $G$ cannot have any chords.

A \emph{linkage minor} of $G$ with respect to a (chordless) linkage $L$ is a minor $H$ of $G$ such that all path edges in $E(G)\setminus E(H)$ have been contracted, and all rung edges in $E(G)\setminus E(H)$ have been deleted. If the linkage $L$ is clear from the context we simply say that $H$ is a linkage minor of $G$. 
Moreover, let $G$ be a graph with a chordless 2-linkage $L$. If $G$ has a linkage minor isomorphic to $K_{2,4}$, such that the terminals of $L$ are mapped to the degree-2 vertices of $K_{2,4}$, we say that $G$ has an $\XX$ linkage minor (cf. Figure \ref{fig:XX}).

\begin{figure}[tbp]
  \centering
  \includegraphics{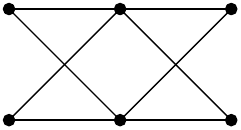}
  \caption{The graph $K_{2,4}$.}\label{fig:XX}
\end{figure}

\begin{figure}[tbp]
  \centering
  \includegraphics{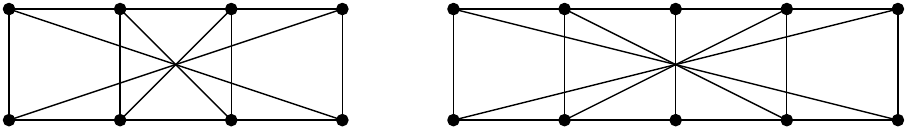}
  \caption{The graphs $\trugr_4$ and $\trugr_5$.}\label{fig:trugr}
\end{figure}

For each integer $n$, the graph $\trugr_n$ is the graph with $V(\trugr_n) = \{v_1, \ldots, v_{n}\} \cup \{u_1, \ldots, u_{n}\}$, and
\begin{align}
  E(\trugr_n) = & \{ v_i v_{i+1} \mid i = 1, \ldots, n-1\} \cup \{u_i u_{i+1} \mid i = 1, \ldots, n-1\} \cup\notag\\
  & \{ u_i v_i \mid i = 1, \ldots, n\} \cup \{u_i v_{n+1-i} \mid i = 1, \ldots, n\}.
\end{align}
We denote by $L_n$ the linkage of $\trugr_n$ consisting of all edges $v_iv_{i+1}$ and $u_iu_{i+1}$ for $i = 1, \ldots, n-1$. In Figure \ref{fig:trugr} the graphs $\trugr_4$ and $\trugr_5$ are depicted.

Finally, we say that $G$ is a \emph{Truemper graph} if $G$ is a linkage minor of $\trugr_n$ for some $n$. The main result of this paper is the following:

\begin{theorem}\label{thm:mainresult}
  Let $G$ be a graph. The following statements are equivalent:
  \begin{enumerate}
    \item\label{main:i}   $G$ has a vital linkage of order 2;
    \item\label{main:ii}  $G$ has a chordless spanning linkage of order 2 with no $\XX$ linkage minor;
    \item\label{main:iii} $G$ is a Truemper graph.%linkage minor of $\trugr_n$ for some integer $n$.
  \end{enumerate}
\end{theorem}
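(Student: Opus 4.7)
My plan is to prove the cyclic implications $(i) \Rightarrow (ii) \Rightarrow (iii) \Rightarrow (i)$. The first and third are relatively direct; $(ii) \Rightarrow (iii)$ is the main structural step. For $(i) \Rightarrow (ii)$, a vital linkage is chordless (as noted in the text). If $G$ had an $\XX$ linkage minor $H \cong K_{2,4}$ with the terminals of $L$ mapped to the four degree-$2$ vertices, then $H$ admits a second spanning $2$-linkage with the same terminal pairs, obtained by interchanging the two degree-$4$ vertices. Lifting this second linkage through the contractions and deletions produces another $2$-linkage in $G$ with the same terminal pairs but using at least one rung edge, contradicting uniqueness of $L$. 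For $(iii) \Rightarrow (i)$, direct inspection of $\trugr_n$ shows that $L_n$ is vital; vitality then descends to any linkage minor $H$, since any alternative $2$-linkage in $H$ must use a rung edge (the unique $2$-linkage in $H$ within its path-edge subgraph being $L_n|_H$), and lifting such an alternative to $\trugr_n$ yields a second linkage there, contradicting vitality of $L_n$.

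For $(ii) \Rightarrow (iii)$, I represent each rung $p_i q_j$ as the grid point $(i,j)$, with $P = p_1 \cdots p_r$ and $Q = q_1 \cdots q_s$, and say two rungs \emph{cross} when $(i-i')(j-j') < 0$. A direct check shows the $\XX$ pattern is equivalent to an induced $2K_2$ in the crossing graph; a slightly more involved check shows that an induced $C_4$ also contains an $\XX$ pattern on its four vertices (the rungs of extreme $i$-rank must have middle $j$-rank, otherwise they would be comparable to the others and hence isolated in the crossing subgraph). Moreover, the crossing graph is the incomparability graph of a subset of the grid poset, hence a cocomparability graph; such graphs are perfect and contain no induced $C_5$. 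Thus the three forbidden-subgraph conditions for split graphs are all precluded, and we obtain a partition of the rungs into a chain (pairwise non-crossing, monotone non-decreasing in $(i,j)$) and an antichain (pairwise crossing, monotone non-increasing). These play the roles of the straight and crossed rungs of some $\trugr_n$: choosing $n$ large enough and assigning each $p_i$ (respectively $q_j$) to a suitable interval of $u$-indices (respectively $v$-indices), with wide gaps between intervals preventing spurious rungs, realises $G$ as a linkage minor of $\trugr_n$.

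The main obstacle is the combinatorial bookkeeping for the explicit $\trugr_n$ embedding, including the handling of degenerate cases where rungs share an endpoint of a linkage path and the verification that the chosen $n$ and intervals produce exactly the desired linkage minor (without extraneous rungs that cannot be deleted). The split decomposition itself should follow cleanly from the forbidden-subgraph arguments, once the $C_4$-to-$\XX$ reduction is made rigorous.
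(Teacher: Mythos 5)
Your outline of $(i)\Rightarrow(ii)$ and $(iii)\Rightarrow(i)$ is essentially sound (the paper handles these via its Lemmas \ref{lem:i-ii} and \ref{lem:iii-ii}; note that ``direct inspection'' of the vitality of $L_n$ in $\trugr_n$ for all $n$ is itself a claim requiring an inductive argument, which the paper supplies in the form of Lemma \ref{lem:iii-ii}). The genuine problem is in your split-graph reduction for $(ii)\Rightarrow(iii)$: the claim that an induced $C_4$ in the crossing graph forces an $\XX$ pattern is false, and consequently the crossing graph of a graph satisfying $(ii)$ need not be a split graph. Concretely, let $P_1 = p_1p_2p_3$ (so $s_1=p_1$, $t_1=p_3$) and $P_2=q_1q_2q_3q_4$, with rung set $\{p_3q_1,\ p_3q_2,\ p_2q_4,\ p_1q_3\}$, i.e.\ grid points $(3,1),(3,2),(2,4),(1,3)$. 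The pairs $\{(3,1),(3,2)\}$ and $\{(2,4),(1,3)\}$ are non-crossing while all four cross-pairs are crossing, so the crossing graph is an induced $C_4$. Yet this graph has no $\XX$ linkage minor: a $K_{2,4}$ linkage minor needs all four rungs to survive, but the terminal $t_1=p_3$ (whose contraction class must remain a terminal, hence a degree-$2$ vertex of $K_{2,4}$ incident with exactly one rung) carries two rungs. Indeed this graph \emph{is} a Truemper graph; the correct decomposition (see the remark after Lemma \ref{lem:signtoTru}) is into a class $A$ of pairwise non-crossing rungs and a class $B$ of rungs that are pairwise non-crossing \emph{after reversing one path}, and $B$ is allowed to contain rungs that share an endpoint (here $B=\{p_3q_1,p_3q_2\}$). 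Such ``tied'' pairs are non-adjacent in your crossing graph, so $B$ is not a clique there and the split-graph machinery does not apply. Your own parenthetical argument for the $C_4$ case silently assumes all $i$-ranks and $j$-ranks are distinct, which is exactly where it breaks.

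To repair the approach you would need to work with two relations simultaneously (crossing, and crossing-or-sharing-an-endpoint), and the F\"oldes--Hammer characterization no longer applies off the shelf; it is not clear the forbidden-configuration route survives this. The paper takes a different and more elementary path for $(ii)\Rightarrow(iii)$ (Lemma \ref{lem:signtoTru}): a minimal counterexample argument that peels off the four terminal vertices --- either a terminal has degree one, or some rung joins two terminals (else an $\XX$ linkage minor is exhibited) --- embeds the reduced graph in some $\trugr_n$, and then extends $\trugr_n$ to $\trugr_{n+2}$ to reinstate the removed vertices. If you want to pursue the chain/antichain picture, you must prove the tie-tolerant version of the partition statement directly rather than via split graphs.
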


\citet{RSXXI} commented, without proof, that graphs with a vital linkage with $k\leq 5$ terminal vertices have path width at most $k$. A weaker claim is the following:

\begin{corollary}
	Let $G$ be a graph with a vital linkage of order $2$. Then $G$ has path width at most 4.
\end{corollary}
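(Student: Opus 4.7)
The plan is to combine the main theorem with an explicit path decomposition of the family of graphs $\trugr_n$. By Theorem~\ref{thm:mainresult}, $G$ is a Truemper graph, meaning $G$ is a linkage minor of some $\trugr_n$. A linkage minor is in particular an ordinary graph minor (obtained by a sequence of edge deletions and edge contractions), and path width is monotone under taking minors. Hence it suffices to show that $\trugr_n$ has path width at most $4$ for every positive integer~$n$.

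To prove this, I would exhibit a path decomposition $(B_1,\ldots,B_m)$ of $\trugr_n$ in which every bag has size exactly $5$, obtained by sweeping the two rails simultaneously from the outside inward. At each step the bag holds two vertices on the ``left frontier'' of the rails (typically one on each rail), two vertices on the ``right frontier'' of the rails, and one additional transitional vertex which is about to enter or leave the active region. As the frontiers advance toward the middle, each crossing rung $u_iv_{n+1-i}$ gets covered exactly when one of its endpoints is still present on one side while the other has just arrived on the opposite side; the rail edges and straight rungs are covered as consecutive frontier positions sit together in a common bag. Direct inspection of small cases such as $\trugr_4$, $\trugr_5$, $\trugr_6$ and $\trugr_7$ confirms the pattern and suggests the general recipe.

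The main obstacle is verifying the three path-decomposition axioms uniformly in $n$: every edge of $\trugr_n$ lies in some bag, every vertex appears in a consecutive interval of bags, and each bag has at most five elements. This requires careful bookkeeping of when each rail vertex enters and leaves the active region; the alternating outside-in processing order (top rail, bottom rail, top rail, bottom rail, $\ldots$) is chosen precisely so that every crossing rung $u_iv_{n+1-i}$ and every straight rung $u_iv_i$ has its endpoints simultaneously active at some common bag before either of them is dismissed. The transitions themselves are of only a few types (slide one frontier inward, or hand off a vertex between the top and bottom sweeps), and each type preserves the bag-size bound of~$5$.

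Once a path decomposition of $\trugr_n$ of width $4$ is in place, monotonicity of path width under minors immediately yields $\mathrm{pw}(G) \leq \mathrm{pw}(\trugr_n) \leq 4$, proving the corollary. Note that the hypothesis that the $2$-linkage is vital is used only to invoke Theorem~\ref{thm:mainresult}; the bound~$4$ itself is then a structural consequence of the Truemper-graph form, not of uniqueness of the linkage.
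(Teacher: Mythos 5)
Your proposal is correct and follows essentially the same route as the paper, which simply observes that the corollary follows from the alternative (``spiderweb'') depiction of $\trugr_{2n}$: reduce via Theorem~\ref{thm:mainresult} and minor-monotonicity of path width to bounding $\trugr_n$, then sweep both rails inward keeping the four frontier vertices $\{u_i,v_i,u_{n+1-i},v_{n+1-i}\}$ plus one transitional vertex in each bag. Your outside-in sweep does verify: the stage-$i$ bag covers all four rungs incident with those indices, and inserting the next frontier vertex before discarding its predecessor covers each rail edge while never exceeding five vertices per bag.
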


Another consequence of our result is that graphs with a vital linkage of order 2 embed in the projective plane:

\begin{corollary}
  Let $G$ be a graph with a vital linkage of order 2. Then $G$ can be embedded on a M\"obius strip.
\end{corollary}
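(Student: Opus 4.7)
My plan is to deduce the corollary from Theorem~\ref{thm:mainresult} together with an explicit M\"obius-strip embedding of each graph $\trugr_n$. If $G$ has a vital linkage of order $2$, the theorem gives that $G$ is a linkage minor of $\trugr_n$ for some $n$, so in particular $G$ is a minor of $\trugr_n$. Since embeddability on a fixed surface is preserved by taking minors---edge deletion just removes an arc from the drawing, and an edge contraction can be performed by sliding one endpoint along the edge to the other---it suffices to embed every $\trugr_n$ on the M\"obius strip.

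For that embedding I would represent the M\"obius strip as the rectangle $R = [-1, 1] \times [0, n]$ with the identification $(-1, y) \sim (1, n - y)$, place $v_i$ at $(-1/2, i - 1/2)$ and $u_i$ at $(1/2, i - 1/2)$ for $i = 1, \dots, n$, and draw the two sets of path edges as unit vertical segments along the lines $x = \pm 1/2$. Each direct rung $u_i v_i$ is drawn as the horizontal segment at height $y = i - 1/2$ across the central strip $[-1/2, 1/2]$. Each crossing rung $u_i v_{n+1-i}$ is routed instead through the identified vertical boundary: from $u_i$ rightward to $(1, i - 1/2)$, which equals $(-1, n - i + 1/2)$ in $R$, and then rightward to $v_{n+1-i} = (-1/2, n - i + 1/2)$. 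The identification matches the endpoints correctly because $(n + 1 - i) - 1/2 = n - (i - 1/2)$.

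It remains to verify that no two edges cross in the interior of $R$. The direct rungs lie on $n$ distinct horizontal lines in the central strip and meet the vertical path segments only at vertices; the horizontal pieces of the crossing rungs lie in the two outer strips $[1/2, 1] \times [0, n]$ and $[-1, -1/2] \times [0, n]$, again on $n$ distinct horizontal lines, so they can meet neither each other nor any direct rung, and they meet the path segments only at $u_i$ and $v_{n+1-i}$. This is organisational bookkeeping rather than a substantive obstacle; the only point requiring care is checking that the two halves of each crossing rung are glued together correctly by the M\"obius identification, for which the height identity just noted is the key calculation (with the cosmetic caveat that for odd $n$ and $i = (n+1)/2$ the direct and crossing rungs coincide as a single edge of $\trugr_n$, and are drawn only once).

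With an embedding of $\trugr_n$ in hand, the corollary then follows immediately from the minor-closure property noted in the first paragraph.
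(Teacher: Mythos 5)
Your proof is correct and takes essentially the same approach as the paper, which merely points to an alternative depiction of $\trugr_{2n}$ (analogous to Figure~\ref{fig:spiderweb}) exhibiting the M\"obius-strip embedding and leaves the details to the reader. Your explicit coordinates, the check of the gluing identity $(n+1-i)-1/2 = n-(i-1/2)$, the handling of the coincident middle rung for odd $n$, and the appeal to minor-closure of surface embeddability together supply exactly the verification the paper omits.
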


Both corollaries can be seen to be true by considering an alternative depiction of $\trugr_{2n}$, analogous to Figure \ref{fig:spiderweb}. %, together with the observation that $\trugr_{n'}$ is a linkage minor of $\trugr_n$ for $n' \leq n$.

\begin{figure}[tbp]
  \centering
  \includegraphics[scale=.9]{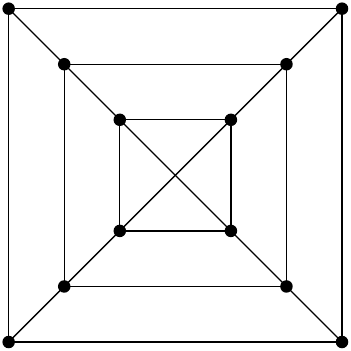}
  \caption{The graph $\trugr_6$. The linkage is formed by the two diagonally drawn paths.}\label{fig:spiderweb}
\end{figure}

\section{Proof of Theorem \ref{thm:mainresult}}
We start with a few more definitions. Suppose $L$ is a linkage of order 2 with components $P_1$ and $P_2$, such that the terminal vertices of $P_1$ are $s_1$ and $t_1$, and those of $P_2$ are $s_2$ and $t_2$. We order the vertices on the paths in a natural way, as follows. If $v$ and $w$ are vertices of $P_i$, then we say that $v$ is \emph{(strictly) to the left} of $w$ if the graph distance from $s_i$ to $v$ in the subgraph $P_i$ is (strictly) smaller than the graph distance from $s_i$ to $w$. The notion \emph{to the right} is defined analogously.

We will frequently use the following elementary observation, whose proof we omit.

\begin{lemma}\label{lem:order}
  Let $G$ be a graph with a chordless spanning linkage $L$ of order 2. Let $P_1$ and $P_2$ be the components of $L$, with terminal vertices respectively $s_1,t_1$ and $s_2,t_2$. Let $H$ be a linkage minor of $G$. If $v$ and $w$ are on $P_i$, and $v$ is to the left of $w$, then the vertex corresponding to $v$ in $H$ is to the left of the vertex corresponding to $w$ in $H$.
\end{lemma}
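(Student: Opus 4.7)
The plan is to show that the two allowed operations in forming a linkage minor --- contraction of a path edge, and deletion of a rung edge --- each preserve the left-to-right order on the residual paths, and then iterate.

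First I would set up the induction: any linkage minor $H$ of $G$ is obtained by a finite sequence of steps, each either contracting a single path edge of the current graph, or deleting a single rung edge. So it suffices to verify the claim for a single operation, and then chain the bijections from $V(G)$ to $V(H)$ (each intermediate ``corresponds to'' map being a surjection whose fibres in the path are intervals).

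For the rung-edge deletion step, the paths $P_1$ and $P_2$ themselves are untouched, so the distance from $s_i$ to any vertex on $P_i$ in the residual $P_i$ is unchanged, and the claim is immediate. For the contraction step, the contracted edge $e=xy$ is a path edge, say on $P_i$, with $x$ immediately to the left of $y$; after contraction the new vertex $z$ lies on the resulting path in the position occupied jointly by $x$ and $y$. For any vertex $u$ on $P_i$ strictly left of $x$, the distance from $s_i$ to $u$ decreases by $0$ if $s_i$ is to the left of $x$ on $P_i$ and is otherwise unchanged in the relevant sense; in either case $u$ remains to the left of $z$. Similarly every vertex strictly to the right of $y$ remains to the right of $z$. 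For vertices on the other path $P_j$ ($j\neq i$) there is nothing to check. Thus the (non-strict) ``to the left'' relation on each $P_i$ is respected by the correspondence.

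Combining these observations along the sequence of operations yields the lemma: if $v$ is to the left of $w$ on $P_i$ in $G$, then after each step the image of $v$ is to the left of the image of $w$ on the image of $P_i$, so the same holds in $H$. The only subtlety --- and it is really only notational --- is that $v$ and $w$ may be identified in $H$ if all the path edges between them get contracted, but since ``to the left'' is defined non-strictly this is consistent with the conclusion. No step of the argument is an obstacle; this is exactly the ``elementary observation'' the authors flag, and the only care required is to handle the contraction case without accidentally asserting a strict inequality.
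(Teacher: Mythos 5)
The paper gives no proof of this lemma at all---it is explicitly flagged as an ``elementary observation, whose proof we omit''---and your induction on single contraction/deletion steps, with the observation that rung deletions leave the paths untouched and that contracting a path edge merges two consecutive vertices without disturbing the order of the rest, is correct and is surely the intended argument. The only blemish is the garbled sentence about the distance from $s_i$ to $u$ ``decreasing by $0$ if $s_i$ is to the left of $x$''; what you mean is simply that the distance is unchanged for vertices left of $x$ and drops by one for vertices right of $y$, which preserves the order either way, and this does not affect the validity of the proof.
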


Without further ado we dive into the proof, which will consist of a sequence of lemmas. The first deals with the equivalence of the first two statements in the theorem.

\begin{lemma}\label{lem:i-ii}
  Let $G$ be a graph with a chordless spanning linkage $L$ of order 2. Then $L$ is vital if and only if $G$ has no $\XX$ linkage minor with respect to $L$.
\end{lemma}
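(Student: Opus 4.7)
I would prove each implication by contrapositive.

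For the direction ``if $G$ has an $\XX$ linkage minor then $L$ is not vital,'' let $H\cong K_{2,4}$ be the linkage minor, with the four terminals of $L$ mapped to the degree-$2$ vertices $a_1,a_2,a_3,a_4$ of $H$ and $b_1,b_2$ the degree-$4$ vertices. The projection of $L$ to $H$ routes $\{s_1,t_1\}$ through one hub and $\{s_2,t_2\}$ through the other; swapping the hubs yields a distinct linkage $L^\ast$ in $H$ with the same terminal pairs. I would then lift $L^\ast$ back to $G$: each vertex of $H$ is a contracted equivalence class which, being joined internally only by path edges, is a subpath of $P_1$ or $P_2$, so one can traverse it between any two of its rung-edge attachment points. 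The result is a linkage $L'\neq L$ in $G$ (since it uses rung edges), witnessing non-vitality.

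For the converse ``if $L$ is not vital then $G$ has an $\XX$ linkage minor,'' let $L'=Q_1\cup Q_2\neq L$ be another linkage in $G$ with the same terminal pairs. Using chordlessness of $L$, one sees that each $Q_i$ must use at least one rung edge; since $Q_i$ has both endpoints on the same path $P_i$ and each rung edge switches sides, parity forces each $Q_i$ to use at least two rung edges. Let $e_1=x_1y_1$ and $e_2=x_2y_2$ (with $x_j\in V(P_1)$, $y_j\in V(P_2)$) be the first and last rung edges of $Q_1$, and let $f_1=x_3y_3$, $f_2=x_4y_4$ be those of $Q_2$. Using Lemma~\ref{lem:order} and the vertex-disjointness of the successive $P_1$- and $P_2$-segments of $Q_1$ and $Q_2$, I would verify that on $P_1$ the order is $s_1,\ldots,x_1,\ldots,\{x_3,x_4\},\ldots,x_2,\ldots,t_1$ with $x_3,x_4$ strictly between $x_1$ and $x_2$, and symmetrically $y_3,y_4$ flank $y_1,y_2$ on $P_2$. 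Cutting $P_1$ at $x_1,x_2$ and $P_2$ at $y_3,y_4$ gives six subpaths $S_1,B_1,T_1,S_2,B_2,T_2$; contracting every path edge strictly inside one of them and deleting every rung edge other than $e_1,e_2,f_1,f_2$ yields a minor whose eight surviving edges (two path edges on each $P_i$ and the four named rung edges) form a $K_{2,4}$ with bipartition $\{B_1,B_2\}$ versus $\{S_1,T_1,S_2,T_2\}$, and the four terminals lie on the degree-$2$ side as required.

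The main obstacle is the positional analysis in this second implication: because each $Q_i$ can use arbitrarily many rung edges, and the $P_j$-segments of $Q_1$ and $Q_2$ may interleave in complicated ways, one must check carefully that the \emph{first} and \emph{last} rung edges of each $Q_i$ (rather than some arbitrary pair) give the layout above. The crucial observation is that these extreme rung edges pin down the outermost $P_j$-vertices visited by $Q_i$, and vertex-disjointness of $Q_1$ from $Q_2$ then confines the $P_j$-traces of the other component to the complementary ``gap,'' producing the $K_{2,4}$ pattern directly.
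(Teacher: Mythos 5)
Your proposal is correct and takes essentially the same approach as the paper: for the hard direction the paper likewise extracts the first and last rung edges of each rerouted path $Q_1$, $Q_2$, uses vertex-disjointness to show the two middle attachment points of each path are flanked by the outer attachment points of the other, and contracts the resulting six intervals (deleting all other rung edges) to exhibit the $K_{2,4}$ with the terminals on the degree-$2$ side. The only difference is cosmetic — the paper contracts the segment between the two inner attachment points and then collapses leftover series classes, where you contract the full open intervals $(x_1,x_2)$ and $(y_3,y_4)$ directly — and your easy direction (lifting the second linkage of $K_{2,4}$ back to $G$) is just a more detailed version of the paper's one-line observation that vitality is preserved under linkage minors.
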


\begin{figure}[tbp]
  \centering
  \includegraphics{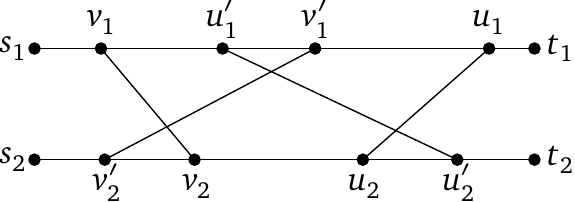}
  \caption{Detail of the proof of Lemma \ref{lem:i-ii}.}\label{fig:subpaths}
\end{figure}

% \begin{figure}[tbp]
%   \centering
%   \includegraphics{}
%   \caption{Detail of the proof of Lemma \ref{lem:i-ii}.}\label{fig:XXlab}
% \end{figure}

\begin{proof}
  First we suppose that there exists a graph $G$ with a non-vital chordless spanning linkage $L$ of order 2 such that $G$ has no $\XX$ linkage minor. Let $P_1$, $P_2$ be the paths of $L$, where $P_1$ runs from $s_1$ to $t_1$, and $P_2$ runs from $s_2$ to $t_2$. Let $P_1'$, $P_2'$ be different paths connecting the same pairs of vertices. Without loss of generality, $P_1' \neq P_1$. But then $P_1'$ must meet $P_2$, so $P_2' \neq P_2$. Let $e = v_1v_2$ be an edge of $P_1'$ such that the subpath $s_1-v_1$ of $P_1'$ is also a subpath of $P_1$, but $e$ is not an edge of $P_1$. Let $f = u_2u_1$ be an edge of $P_1'$ such that the subpath $u_1-t_1$ of $P_1'$ is also a subpath of $P_2$, but $f$ is not an edge of $P_2$. Similarly, let $e' = v_2'v_1'$ be an edge of $P_2'$ such that the subpath $s_2-v_2'$ of $P_2'$ is also a subpath of $P_2$, but $e'$ is not an edge of $P_2$. Let $f' = u_1'u_2'$ be an edge of $P_2'$ such that the subpath $u_2'-t_2$ of $P_2'$ is also a subpath of $P_2$, but $f'$ is not on $P_2$. See Figure \ref{fig:subpaths}.

  Since $P_1'$ and $P_2'$ are vertex-disjoint, $v_2'$ must be strictly to the left of $v_2$ and $u_2$. For the same reason, $v_1'$ must be strictly between $v_1$ and $u_1$. Likewise, $u_2'$ must be strictly to the right of $v_2$ and $u_2$, and $u_1'$ must be strictly between $v_1$ and $u_1$. Now construct a linkage minor $H$ of $G$, as follows.  Contract all edges on the subpaths $s_1-v_1$, $v_1'-u_1'$, and $u_1-t_1$ of $P_1$, contract all edges on the subpaths $s_2-v_2'$, $v_2-u_2$, and $u_2'-t_2$ of $P_2$, delete all rung edges but $\{e,f,e',f'\}$, and contract all but one of the edges of each series class in the resulting graph. Clearly $H$ is isomorphic to $\XX$, a contradiction.

Conversely, suppose that $G$ has an $\XX$ linkage minor, but that $L$ is unique. Clearly having a vital linkage is preserved under taking linkage minors. But $\XX$ has two linkages, a contradiction.
\end{proof}

Next we show that the third statement of Theorem \ref{thm:mainresult} implies the second.

\begin{lemma}\label{lem:iii-ii}
  For all $n$, $\trugr_n$ has no $\XX$ linkage minor with respect to $L_n$.
\end{lemma}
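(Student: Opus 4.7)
The plan is to argue by contradiction. Suppose $\trugr_n$ has an $\XX$ linkage minor with respect to $L_n$. Since $\XX = K_{2,4}$ has four rung edges (the edges not on the natural 2-linkage through the degree-4 vertices), this minor is witnessed by four rung edges $e_1, e_2, e_3, e_4$ of $\trugr_n$. For a rung edge $e = u_iv_j$, write $x(e) = j$ for the index of the $P_1$-endpoint and $y(e) = i$ for the index of the $P_2$-endpoint. Since the terminals of $L_n$ map to the degree-2 vertices of $\XX$, the paths $P_1$ and $P_2$ each decompose in the minor into three consecutive segments, say $A_1 < A_2 < A_3$ on $P_1$ and $B_1 < B_2 < B_3$ on $P_2$. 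Labelling so that $e_1, e_2, e_3, e_4$ join $A_1B_2, A_2B_1, A_3B_2, A_2B_3$ respectively, Lemma \ref{lem:order} yields the strict inequalities
\[
  x(e_1) < x(e_2),\, x(e_4) \quad \text{and} \quad x(e_2),\, x(e_4) < x(e_3),
\]
and analogously $y(e_2)$ is strictly smaller than both $y(e_1)$ and $y(e_3)$, both of which are strictly smaller than $y(e_4)$.

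Next I would exploit the fact that every rung edge of $\trugr_n$ is of \emph{type A} (some $u_iv_i$, satisfying $y = x$) or \emph{type B} (some $u_iv_{n+1-i}$, satisfying $x + y = n+1$). Two type A edges satisfy $x(e) < x(e') \iff y(e) < y(e')$, while two type B edges satisfy $x(e) < x(e') \iff y(e) > y(e')$. Checking each consecutive pair in the cycle $e_1 \to e_2 \to e_3 \to e_4 \to e_1$: the pair $(e_1, e_2)$ has $x$-order opposite to $y$-order and so cannot be two type A edges; the same holds for $(e_3, e_4)$. The pairs $(e_2, e_3)$ and $(e_1, e_4)$ have $x$- and $y$-orders agreeing, and so cannot be two type B edges. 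These four constraints force the type sequence $(e_1, e_2, e_3, e_4)$ to be exactly ABAB or BABA.

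In the ABAB case I would set $e_1 = u_{a_1}v_{a_1}$, $e_3 = u_{a_3}v_{a_3}$, $e_2 = u_{b_2}v_{n+1-b_2}$, $e_4 = u_{b_4}v_{n+1-b_4}$. The inequality $x(e_1) < x(e_4)$ becomes $a_1 + b_4 < n+1$, and $x(e_2) < x(e_3)$ becomes $a_3 + b_2 > n+1$; the $y$-inequalities give $b_2 < a_1$ (from $y(e_2) < y(e_1)$) and $a_3 < b_4$ (from $y(e_3) < y(e_4)$). Combining,
\[
  a_3 + b_2 < a_1 + b_4 < n+1 < a_3 + b_2,
\]
a contradiction. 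The BABA case is handled by the identical computation, or is reduced to ABAB by the automorphism of $\trugr_n$ that sends $v_i \mapsto v_{n+1-i}$ and fixes each $u_i$, which preserves $L_n$ but swaps types A and B.

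The main obstacle is really the initial setup: correctly translating the six-segment structure of the $\XX$ minor into the strict coordinate inequalities above, and then extracting from the two-diagonal structure of the rung edges the constraint that forces the type sequence to alternate. Once the reduction to ABAB/BABA is in hand, the final contradiction is a short arithmetic calculation.
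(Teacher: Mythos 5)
Your proof is correct, but it takes a genuinely different route from the paper. The paper argues by induction on $n$: every edge of $\XX$ is incident with exactly one terminal, so the rung edges of $\trugr_n$ joining two terminals (namely $u_1v_1$, $u_1v_n$, $u_nv_1$, $u_nv_n$) cannot survive in an $\XX$ linkage minor; deleting them leaves the four terminals with degree one, forcing the pendant path edges to be contracted, and the result is $\trugr_{n-2}$, whence the induction closes (the base cases $n\le 2$ being trivial by a vertex count). Your argument is instead direct and non-inductive: you extract the four rung edges witnessing the minor, translate the segment structure of $K_{2,4}$ into strict coordinate inequalities via Lemma \ref{lem:order}, use the two-diagonal structure of the rung edges to force the alternating type pattern, and finish with a short arithmetic contradiction. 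I checked the details: the segment-incidence pattern $A_1B_2, A_2B_1, A_3B_2, A_2B_3$ is the correct one for $K_{2,4}$ with terminals at degree-2 vertices, the four ``cannot both be type A/B'' constraints do propagate to force ABAB or BABA (and in particular rule out the ambiguous middle edge $u_{(n+1)/2}v_{(n+1)/2}$ when $n$ is odd being used with the wrong type), and the final chain $a_3+b_2 < a_1+b_4 < n+1 < a_3+b_2$ is a genuine contradiction. The paper's induction is considerably shorter and reuses the ``peel off the outer square'' self-similarity of $\trugr_n$; your argument is longer but self-contained, and it makes explicit \emph{why} the two-diagonal structure is incompatible with $K_{2,4}$, which is arguably more informative.
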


\begin{proof}
  The result holds for $n \leq 2$, because then $|V(\trugr_n)| < |V(\XX)|$. Suppose the lemma fails for some $n \geq 3$, but is valid for all smaller $n$. Every edge of $\XX$ is incident with exactly one of the four end vertices of the paths. Hence all rung edges incident with at least two of the four end vertices are not in any $\XX$ linkage minor. But after deleting those edges from $\trugr_n$ the end vertices have degree one, and hence the edges incident with them will not be in any $\XX$ linkage minor. Contracting these four edges produces $\trugr_{n-2}$, a contradiction.
\end{proof}

% For the final implication we need some more definitions. Let $e = v_1v_2$ and $f = u_1u_2$ be two rung edges, with $v_1,u_1$ on $P_1$ and $v_2,u_2$ on $P_2$. We say that $e$ and $f$ \emph{cross} if $v_1$ is strictly to the left of $u_1$ and $v_2$ is strictly to the right of $u_2$, or vice versa. We say that $e$ and $f$ \emph{meet} if $v_1 = u_1$ or $v_2 = u_2$. A partition $(A,B)$ of the rung edges is \emph{valid} if the edges in $A$ pairwise cross or meet, and no pair of edges in $B$ is crossing.
% 
\emph{Reversing a path} $P_i$ means exchanging the labels of vertices $s_i$ and $t_i$, thereby reversing the order on the vertices of the path. %The following observation is straightforward; we omit the proof.

\begin{lemma}\label{lem:signtoTru}
  Let $G$ be a graph, and $L$ a chordless spanning linkage of order 2 of $G$ consisting of paths $P_1$, running from $s_1$ to $t_1$, and $P_2$, running from $s_2$ to $t_2$. %Let $(A,B)$ be a valid partition of the rung edges. 
If $G$ has no $\XX$ linkage minor, then $G$ is a linkage minor of $\trugr_n$ with respect to $L_n$ for some integer $n$, such that $L$ is a contraction of $L_n$. 
\end{lemma}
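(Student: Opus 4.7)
The plan is to reformulate the conclusion as a combinatorial labelling of the rung edges of $G$, deduce existence of a valid labelling from the no-$\XX$-minor hypothesis, and then explicitly construct the linkage minor embedding into $\trugr_n$. For each rung $r$ of $G$, let $(p_r,q_r)$ be its position pair, where $p_r$ is the rank of its $P_1$-endpoint along $P_1$ from $s_1$, and $q_r$ is the rank of its $P_2$-endpoint along $P_2$ from $s_2$. The rungs of $\trugr_n$ split naturally into a ``straight'' family $(v_i,u_i)$, whose two ranks agree, and a ``crossed'' family $(v_i,u_{n+1-i})$, whose ranks sum to $n+1$. Any linkage minor of $\trugr_n$ therefore inherits a labelling of its rungs as straight ($S$) or crossed ($C$), together with an underlying index $i_r\in\{1,\ldots,n\}$ for each rung, and monotonicity of the interval partitions on the two paths of $\trugr_n$ forces $q_r$ to be weakly increasing in $p_r$ on $S$-rungs and weakly decreasing in $p_r$ on $C$-rungs.

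Given this reformulation, I would argue by contradiction that $G$ admits a valid $(S,C)$-labelling. Suppose it does not. Then, for every attempted partition of the rungs, some pair violates the required monotonicity; by Lemma~\ref{lem:order} and the freedom to delete rungs and contract path edges, such a local violation can be turned into a linkage minor of $G$ containing four rungs in a specific crossing pattern. Contracting the three outer path segments on each side should then realise the rung structure of $\XX$: the two ``middle'' rungs of the configuration collapse so that their $P_1$-endpoints lie in a single interval (producing one degree-four vertex of $K_{2,4}$), the two ``outer'' rungs collapse so that their $P_2$-endpoints lie in a single interval (producing the other), and the four terminals of $L$ become the four degree-two vertices. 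This contradicts the hypothesis, so a valid labelling must exist. In spirit this mirrors the contraction argument in the proof of Lemma~\ref{lem:i-ii}.

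With the labelling in hand, the embedding into $\trugr_n$ is then a constructive matter: pick $n$ sufficiently large, choose an index $i_r\in\{1,\ldots,n\}$ for each rung consistent with its label, and determine the interval partitions of the two paths of $\trugr_n$ from the rung positions $p_r$ and $q_r$. The main obstacle I anticipate lies here, in the \emph{global consistency} of these intervals: the same set $\{1,\ldots,n\}$ indexes the vertices of both paths of $\trugr_n$, so the $P_2$-interval of $\trugr_n$ corresponding to a fixed value of $q$ must simultaneously contain the indices $i_r$ of all straight rungs with $q_r=q$ and the reflected indices $n+1-i_r$ of all crossed rungs with $q_r=q$, without overlapping neighbouring intervals. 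I expect to establish this by building up $n$ and the two interval partitions greedily in tandem with the index assignment, exploiting the monotonicity of the $(S,C)$-labelling and Lemma~\ref{lem:order} to ensure that no conflict arises between the constraints imposed by $S$-rungs and those imposed by $C$-rungs.
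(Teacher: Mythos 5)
Your strategy is the ``valid partition'' approach that the paper only mentions as an unproved aside after this lemma; the paper's actual proof is much more elementary: it takes an edge-minimal counterexample, peels off either a pendant edge at a terminal or a rung joining two terminals (showing that if neither exists then an $\XX$ linkage minor is forced), applies induction, and re-attaches the removed edge by growing $\trugr_n$ to $\trugr_{n+2}$. Your route is legitimate in spirit, but as written it has two genuine gaps. First, the step from ``no valid $(S,C)$-labelling exists'' to ``four rungs in a specific crossing pattern'' is not a matter of finding \emph{a pair} that violates monotonicity: any single pair of rungs can always be accommodated in one of the two classes, so the obstruction to a valid partition is necessarily a multi-rung, global one. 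Concretely, a valid partition asks that the set $S$ be independent in the ``crossing'' conflict graph and $C$ independent in its complement (up to pairs sharing an endpoint position, which conflict with nothing); non-existence of such a partition is governed by the split-graph obstructions (induced $2K_2$, $C_4$, $C_5$ in the conflict graph), and you would have to enumerate these configurations and verify that each yields an $\XX$ linkage minor. None of this is done.

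Second, and more seriously, the final constructive step cannot be completed from the labelling alone: a valid $(S,C)$-labelling is \emph{not} sufficient for embeddability in $\trugr_n$. The graph $\XX = K_{2,4}$ itself is a counterexample: putting the two rungs incident with the degree-4 vertex of $P_2$ into $S$ and the two rungs incident with the degree-4 vertex of $P_1$ into $C$ gives a valid partition (each class consists of two rungs sharing an endpoint, hence non-crossing in either orientation), yet $\XX$ is not a Truemper graph. One can check directly that the interval constraints you describe are contradictory for this labelling: the middle $P_1$-interval $I_2$ is forced inside the middle $P_2$-interval, and the two crossed rungs then force $\min I_2 + \max I_2$ to be simultaneously greater than and less than $n+1$. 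This is exactly why the paper's aside asserts its second bullet only for graphs already known to be Truemper graphs. So the ``global consistency'' issue you flag is not a technical nuisance to be handled greedily; the no-$\XX$ hypothesis must be invoked again in an essential way at this stage, and the proposal does not say how. The paper's minimal-counterexample argument avoids all of this.
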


%The lemma can be strengthened to require that $\trugr_n$ has a valid partition $(A',B')$ with $A\subseteq A'$ and $B\subseteq B'$. We omit the proof of this fact.

\begin{proof}
  Suppose the statement is false. Let $G$ be a counterexample with as few edges as possible. If some end vertex of a path, say $s_1$, has degree one (with $e = s_1v$ the only edge), then we can embed $G\contract e$ in $\trugr_n$ for some $n$. Let $G'$ be obtained from $\trugr_n$ by adding four vertices $s_1', t_1', s_2', t_2'$, and edges $s_1'v_1, s_1's_2', s_1't_2', s_2'u_1, s_2't_1', v_nt_1', u_nt_2', t_1't_2'$. Then $G'$ is isomorphic to $\trugr_{n+2}$, and $G'$ certainly has $G$ as linkage minor.

  Hence we may assume that each end vertex of $P_1$ and $P_2$ has degree at least two. Suppose no rung edge runs between two of these end vertices. Then it is not hard to see that $G$ has an $\XX$ minor, a contradiction. Therefore some two end vertices must be connected. By reversing paths as necessary, we may assume there is an edge $e = s_1s_2$.

  %Since $e$ crosses no edge, we can change our partition so that $e \in B$. 
By our assumption, $G\delete e$ can be embedded in  $\trugr_n$ for some $n$. Again, let $G'$ be obtained from $\trugr_n$ by adding four vertices $s_1', t_1', s_2', t_2'$, and edges $s_1'v_1, s_1's_2', s_1't_2', s_2'u_1, s_2't_1', v_nt_1', u_nt_2', t_1't_2'$. Then $G'$ is isomorphic to $\trugr_{n+2}$, and $G'$ certainly has $G$ as linkage minor, a contradiction.
\end{proof}

As an aside, it is possible to prove a stronger version of the previous lemma. We say a partition $(A,B)$ of the rung edges is \emph{valid} if the edges in $A$ are pairwise non-crossing, and the edges in $B$ are pairwise non-crossing after reversing one of the paths. One can show:
\begin{itemize}
	\item Each Truemper graph has a valid partition.
	\item For every valid partition $(A,B)$ of a Truemper graph $G$, some  $\trugr_n$ has $G$ as linkage minor in such a way that $(A,B)$ extends to a valid partition of $\trugr_n$.
\end{itemize}

Now we have all ingredients of our main result.

\begin{proof}[Proof of Theorem \ref{thm:mainresult}]
  From Lemma \ref{lem:i-ii} we learn that \eqref{main:i}$\Leftrightarrow$\eqref{main:ii}. From Lemma \ref{lem:iii-ii} we learn that \eqref{main:iii}$\Rightarrow$\eqref{main:ii}, and from Lemma %s \ref{lem:makesign} and 
 \ref{lem:signtoTru} we conclude that \eqref{main:ii}$\Rightarrow$\eqref{main:iii}.
\end{proof}

\renewcommand{\Dutchvon}[2]{#1}
\bibliography{matbib2009}
\bibliographystyle{plainnat}

\end{document}